 \title{On the anti-canonical ring and varieties of Fano type}
 \author{Paolo Cascini}
 \address{Department of Mathematics, Imperial College London, 180 Queen's Gate, London SW7 2AZ, UK}
\email{p.cascini@imperial.ac.uk}
\author{Yoshinori  Gongyo}
\address{Graduate School of Mathematical Sciences, 
the University of Tokyo, 3-8-1 Komaba, Meguro-ku, Tokyo 153-8914, Japan.}
\email{gongyo@ms.u-tokyo.ac.jp}
\address{Department of Mathematics, Imperial College London, 180 Queen's Gate, London SW7 2AZ, UK}
\email{y.gongyo@imperial.ac.uk}
\date{\today, version 0.17}
\thanks{The first author was partially supported by an EPSRC grant. The second author was partially supported by Aid-in-Grant from JSPS and Research expense from the JRF fund.}
\dedicatory{Dedicated to Professor~Fumio~Sakai on the~occasion of his retirement.}
\subjclass[2010]{Primary 14J45, Secondaly 14E30}
\keywords{Fano variety, anti-canonical ring, singularity of log pair}
\newtheorem{thm}{Theorem}[section]
\newtheorem{prop}[thm]{Proposition}
\newtheorem{lem}[thm]{Lemma}
\newtheorem{rem}[thm]{Remark}
\newtheorem{cor}[thm]{Corollary}
\newtheorem{conj}[thm]{Conjecture}
\newtheorem{ques}[thm]{Question}
\theoremstyle{definition}
\newtheorem{defi}[thm]{Definition}
\newtheorem{eg}[thm]{Example}
\newtheorem*{ack}{Acknowledgments} 
\begin{document}
\bibliographystyle{amsalpha}
 
 \begin{abstract}We study the anti-canonical ring of a projective variety and we characterise varieties of log Fano type depending on the singularities of these models. 
 \end{abstract}

 \maketitle

 \section{Introduction}  
Over the last few years, there have been many developments  about the study of the canonical model of a projective variety. On the other hand, not much is known about its anti-canonical model. 
Few decades ago, Fumio Sakai considered this problem for projective surfaces (e.g. see 

\cite{sakai1},  and \cite{sakai2}). In particular, he gave a characterisation of the anti-canonical model of any rational surface with big anti-canonical divisor.  

The purpose of these notes is to study the anti-canonical model for  higher dimensional projective varieties.
More specifically, we describe a characterisation of  varieties of Fano type depending on the singularities of their anti-canonical models. 

Our main result is the following: 
 
 \begin{thm}[{=Corollary \ref{char by lt-ness}}]\label{main-intro1}\label{t-intro}
 Let $X$ be a $\mathbb{Q}$-Gorenstein normal variety.

Then $X$ is of Fano type if and only if   $-K_X$ is big, $R(-K_X)$ is finitely generated, and $\mathrm{Proj}\, R(-K_X)$ has log terminal singularities.
 \end{thm}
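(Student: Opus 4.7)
The plan is to prove the two implications separately, using BCHM for the forward direction and a discrepancy comparison plus a crepant-pullback construction for the reverse.

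For the forward direction, assume $(X,\Delta)$ is klt with $-(K_X+\Delta)$ ample, so $-K_X = \Delta + (\text{ample})$ is big. I would perturb to a klt pair $(X,\Delta_0)$ with $K_X+\Delta_0\sim_{\mathbb{Q}} 0$ by taking $\Delta_0 := \Delta + \tfrac{1}{m}D$ for a general $D\in|-m(K_X+\Delta)|$. For $0<t\ll 1$, the pair $(X,(1+t)\Delta_0)$ remains klt and
\[ K_X + (1+t)\Delta_0 \sim_{\mathbb{Q}} t\Delta_0 \sim_{\mathbb{Q}} -tK_X \]
is big. BCHM then yields finite generation of $R(K_X+(1+t)\Delta_0)$, which shares its $\mathrm{Proj}$ with $R(-K_X)$; the corresponding log canonical model is $(Y,\bar\Delta)$ with $\bar\Delta\geq 0$ and klt, whence $Y$ itself is klt.

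For the reverse direction, let $f\colon W\to X$ and $g\colon W\to Y$ be a common log resolution of $h\colon X\dashrightarrow Y$. Since $-K_Y$ is ample, the ample-model structure gives
\[ f^*(-K_X) = g^*(-K_Y) + E, \qquad E\geq 0 \text{ and } g\text{-exceptional}, \]
so $g^*K_Y - f^*K_X = E$. Combining with the discrepancy identities $K_W = f^*K_X + \sum_P a(P;X)P = g^*K_Y + \sum_P a(P;Y)P$ yields $a(P;X) - a(P;Y) = \mult_P E \geq 0$ for every prime $P$ on $W$. For any $f$-exceptional $P$: if $P$ is also $g$-exceptional, then $a(P;X)\geq a(P;Y) > -1$ by klt-ness of $Y$; otherwise $\mult_P E = 0$ and $a(P;X) = a(P;Y) = 0 > -1$. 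Hence $X$ is klt.

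To upgrade klt-ness to Fano type, take $\Delta_Y := \tfrac{1}{m}D_Y$ with $D_Y\in|-mK_Y|$ general (so that, $-mK_Y$ being very ample, $D_Y$ avoids all $g$-exceptional centres), giving a klt pair $(Y,\Delta_Y)$ with $K_Y+\Delta_Y\sim_{\mathbb{Q}} 0$, and define $\Delta_X$ on $X$ by the crepant identity $K_X + \Delta_X = h^*(K_Y+\Delta_Y)$. Its coefficient along an $h$-exceptional prime $P\subset X$ equals $-a(\widetilde P;Y,\Delta_Y) = -a(\widetilde P;Y,0)$, which is $\geq 0$ by the previous step applied to the $g$-exceptional, non-$f$-exceptional valuation $\widetilde P = f_*^{-1}P$. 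Thus $\Delta_X\geq 0$, crepancy propagates klt-ness, and $\Delta_X\sim_{\mathbb{Q}} -K_X$ is big. Writing $\Delta_X\sim_{\mathbb{Q}} A + E'$ with $A$ ample and $E'\geq 0$ (Kodaira's lemma), the pair $(X,(1-\varepsilon)\Delta_X + \varepsilon E')$ is klt for small $\varepsilon$ and has $K_X + ((1-\varepsilon)\Delta_X + \varepsilon E')\sim_{\mathbb{Q}} -\varepsilon A$ anti-ample, certifying that $X$ is of Fano type. The hardest point will be the effectivity of $\Delta_X$: it hinges precisely on the ample-model relation $E\geq 0$, which forces $a(\widetilde P;Y,0)\leq 0$ on valuations extracted by $h^{-1}$.
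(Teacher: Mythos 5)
Your argument is correct, and while your ``only if'' direction follows essentially the paper's strategy (perturb to a big klt pair with $K_X+\Delta_0\sim_{\mathbb Q}0$, invoke \cite{bchm} to realise $Y=\mathrm{Proj}\,R(-K_X)$ as a log canonical model whose pushed-forward boundary is klt), your ``if'' direction takes a genuinely different route. The paper uses \cite[Corollary 1.4.3]{bchm} to extract precisely the $\varphi$-exceptional divisors --- possible because the ample-model identity $f^*(-K_X)=g^*(-K_Y)+E$ with $E\ge 0$ from \cite[Lemma 1.6]{hk} shows these have non-positive discrepancy over $(Y,0)$ --- obtaining a $\mathbb{Q}$-factorial model $Y'$ crepant over $Y$ and isomorphic to $X$ in codimension one; $Y'$ is then of Fano type, and the boundary is transported to $X$ across the small map via \cite[Lemma 2.4]{bir-fano}. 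You instead work entirely on the common resolution: the same identity gives $a(P;X)\ge a(P;Y)>-1$, hence klt-ness of $X$ directly, and the crepant pullback $K_X+\Delta_X=f_*g^*(K_Y+\Delta_Y)$ of a general log Calabi--Yau boundary on $Y$ has $\Delta_X=f_*E+h_*^{-1}\Delta_Y\ge 0$ precisely because $E\ge 0$; the final perturbation $\Delta_X\sim_{\mathbb Q}A+E'$ is the same device the paper uses in its last step. Your route avoids the divisor-extraction theorem entirely, which is a genuine simplification when $\Delta=0$ (and in fact the same computation works for the general Theorem \ref{thm1}, since $f^*(K_X+\Delta)=g^*(K_Y+\Gamma)-E$ still produces an effective correction term), at the cost of carrying out the discrepancy bookkeeping by hand on $W$. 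Two small points you should make explicit: in the forward direction, klt-ness of $(Y,\bar\Delta)$ only descends to $(Y,0)$ once you observe that $K_Y$ itself is $\mathbb{Q}$-Cartier (which holds because $K_Y+\bar\Delta\sim_{\mathbb Q}-tK_Y$ is ample and $\mathbb{Q}$-Cartier); and in the reverse direction the case of an $f$-exceptional but non-$g$-exceptional $P$ is vacuous, since $\varphi$ being a birational contraction forces every $f$-exceptional divisor on $W$ to be $g$-exceptional, so that your case split can be streamlined.
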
 
 Recall that  a normal projective  variety  $X$ is said to be of {\em Fano type} if it admits a $\mathbb Q$-divisor $\Delta\ge 0$ such that $(X,\Delta)$ is  klt and $-(K_X+\Delta)$ is ample.  In dimension two, the main result of  \cite{big} implies that a rational surface with big anti-canonical divisor is a Mori dream space. Thus, the theorem follows from the arguments in \cite{gost}.   On the other hand, it is known that  higher dimensional varieties with big and nef anti-log canonical divisor are not necessarily Mori dream spaces (e.g. see \cite[Example 5.2]{g1}). Note that we do not assume that $X$ is a Mori dream space in Theorem \ref{t-intro}. Note that in dimension two, Hwang and Park   gave a different proof of  Theorem \ref{t-intro} (see \cite[Theorem 1.1]{hp})
 
The paper is organised as follows. We give a proof of Theorem \ref{t-intro} in Section \ref{s_proof}. In Section \ref{lc}, we consider the problem about the finite generation of Cox rings of log canonical log Fano varieties  and in Section \ref{questions} we describe some examples of varieties  of Fano type which give a negative answer to some questions raised before on these problems.

\begin{ack} The second author discussed several problems related to the examples in Section \ref{questions} with  Fr\'ed\'eric Campana, Hiromichi Takagi, Takuzo Okada and Kiwamu Watanabe while he was a Ph.D  student. 
Some of the questions considered in this paper were raised during an AIM workshop in May 2012.
We would also like to thank  Ivan Cheltsov,  Anne-Sophie Kaloghiros,  Ana-Maria Castravet,  James $\mathrm{M^{c}}$Kernan, John Christian Ottem and Vyacheslav Vladimirovich Shokurov for several useful discussions. We are particularly thankful to  Ana-Maria Castravet for pointing out an error  in a previous version of this notes. We also would like to thank DongSeon Hwang for informing us that Theorem \ref{t-intro} was already  known in dimension two \cite{hp}.
\end{ack}

  \section{Preliminaries } 
 
We work over the field of complex numbers. We use the standard definitions for the  singularities appearing in the minimal model program: 

\begin{defi}[{cf.~\cite[Definition 2.34]{komo}}]\label{sing of pairs}
Let $X$ be a normal variety  and $\Delta$ be an effective $\mathbb{Q}$-divisor on $X$ such that $K_X+\Delta$ is $\mathbb{Q}$-Cartier. 
Let $\pi: \widetilde{X} \to X$ be a birational morphism from a normal variety $\widetilde{X}$.
Then we can write 
$$K_{\widetilde{X}}=\pi^*(K_X+\Delta)+\sum_{E}a(E, X, \Delta) E,$$ 
where $E$ runs through all the distinct prime divisors on $\widetilde{X}$ and  $a(E, X, \Delta)$ is a rational number. 
We say that the pair $(X, \Delta)$ is \textit{log canonical} (resp. \textit{klt}) if $a(E, X, \Delta) \ge -1$ (resp. $a(E, X, \Delta) >-1$) for every prime divisor $E$ over $X$.  
If $\Delta=0$, we simply say that $X$ has only log canonical singularities (resp. log terminal  singularities). We say that the pair $(X,\Delta)$ is \textit{dlt} if the coefficients of $\Delta$ are less or equal one and $a(E,X,\Delta)>-1$ for any exceptional divisor $E$ of $\pi$. 
\end{defi}

 \subsection{Section rings and Cox rings}
 \begin{defi}\label{section ring}Let $X$ be a normal variety and let $L_i$ be a line bundle for $i=1, \dots k$. We define the ring 
 
 $$R(X; L_1, \dots, L_k)=\sum_{(m_1,\dots, m_k) \in \mathbb{Z}}H^{0}(X, \otimes_{i=1}^kL_i^{m_i}).
 $$ 
 In particular, if $\{L_i\}$ is a basis of $\mathrm{Pic}_{\mathbb{Q}}\,X$, 
the ring $R(X; L_1, \dots, L_k)$ is called the {\em Cox ring} of $X$ (see \cite[Subsection 2.4]{gost} for  more details). 
 \end{defi}
 \subsection{Mori dream spaces}
We now recall few basic fats about Mori Dream Spaces, introduced by Hu and Keel in \cite{hk} (see also \cite{mac-mori}). 
\begin{defi}\label{Mori dream space} 
A normal projective variety $X$ over a field is called a \textit{$\mathbb{Q}$-factorial Mori dream space} (or \textit{Mori dream space} for short) 
if $X$ satisfies the following three conditions:
\begin{enumerate}[(i)]
\item $X$ is $\mathbb{Q}$-factorial, $\mathrm{Pic}{(X)}$ is finitely generated,
and $\mathrm{Pic}{(X)}_{\mathbb{Q}} \simeq \mathrm{N}^1{(X)}_{\mathbb{Q}},$\label{fin_pic}
\item $\mathrm{Nef}{(X)}$ is the affine hull of finitely many semi-ample
line bundles, 
\item there exists a finite collection of small birational maps $f_i: X \dasharrow X_i$
such that each $X_i$ satisfies (i) and (ii), and that $\mathrm{Mov}{(X)}$ is the union
of the $f_i^*(\mathrm{Nef}{(X_i)})$.
\end{enumerate}
\end{defi}

\begin{rem}
Over the field of complex numbers,
the finite generation of $\mathrm{Pic}{(X)}$ is equivalent to the condition
$\mathrm{Pic}{(X)}_{\mathbb{Q}} \simeq \mathrm{N}^1{(X)}_{\mathbb{Q}}$.
\end{rem}
As suggested by the name, one of the advantages of working on a Mori dream space, is that on these varieties we can run an MMP for any divisor: 
\begin{prop}{ (\cite[Proposition 1.11]{hk})}\label{mmp on mds}
Let $X$ be a $\mathbb{Q}$-factorial Mori dream space. 
Then for any divisor $D$ on $X$ which is not nef, there exists always a $D$-negative contraction or a flip. In addition, any sequence of such flips terminates. 
\end{prop}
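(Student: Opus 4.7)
The plan is to use the Mori chamber decomposition of the effective cone that is encoded by the three defining conditions of an MDS. By condition (iii), the movable cone $\overline{\mathrm{Mov}}(X)$ is covered by the finitely many subcones $f_i^{*}\mathrm{Nef}(X_i)$, and by condition (ii) each $\mathrm{Nef}(X_i)$ is rational polyhedral. Combined with the finite generation of $\mathrm{Pic}(X)_{\mathbb Q}$ in condition (i), this yields a finite, polyhedral decomposition of $\overline{\mathrm{Eff}}(X)$ whose top-dimensional chambers inside $\overline{\mathrm{Mov}}(X)$ are exactly the cones $f_i^{*}\mathrm{Nef}(X_i)$, while the chambers meeting the boundary of $\overline{\mathrm{Mov}}(X)$ correspond to divisorial behaviour.

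With this in hand, fix an ample class $A$ on $X$ and consider the segment $D_t = (1-t)A + tD$ for $t\in[0,1]$. Since $D$ is not nef, there is a largest $t_0 < 1$ such that $D_{t_0}\in\mathrm{Nef}(X)$, and $D_{t_0}$ lies on a codimension-one face $W$ of $\mathrm{Nef}(X)$. The key step is to analyse which wall of the chamber decomposition we have just reached. Either $W$ lies in the interior of $\overline{\mathrm{Mov}}(X)$, in which case it is shared with a neighbouring chamber $f_j^{*}\mathrm{Nef}(X_j)$ and the birational map $X\dashrightarrow X_j$ is a small modification whose first step is a flip (or flop), or $W$ is contained in the boundary of $\overline{\mathrm{Mov}}(X)$, in which case a general divisor in the class of $D_{t_0+\varepsilon}$ acquires a fixed component and one obtains a divisorial contraction by contracting that component.

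To check that the step produced is genuinely $D$-negative, I would observe that $D$ lies strictly on the far side of $W$ from $\mathrm{Nef}(X)$, so any curve $C$ spanning the extremal ray dual to $W$ satisfies $D\cdot C < 0$. The existence of such a contracting curve, and the extremality of the ray, come from Proposition~2.7-style finiteness of extremal rays, which itself follows from the polyhedrality of $\mathrm{Nef}(X_i)$ via duality. Finally, termination of a sequence of $D$-flips is immediate: each flip replaces $X$ by a new SQM whose nef cone is a different chamber in the finite Mori chamber decomposition, and no chamber can be revisited because the $D$-value strictly decreases across each wall.

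The main obstacle, in my view, is the verification that a wall-crossing in the chamber decomposition corresponds to a \emph{single} flip rather than a composition, and that the resulting target is again $\mathbb Q$-factorial with the MDS property; this is where one really uses condition (iii), namely that each of the finitely many small models $X_i$ is itself a Mori dream space. The divisorial-contraction case also requires some care to ensure that the contraction is of a single extremal ray, which I would deduce by shrinking the wall to a codimension-one face of $\mathrm{Nef}(X)$ that meets $\partial\,\overline{\mathrm{Mov}}(X)$ and invoking the semiampleness guaranteed by condition (ii).
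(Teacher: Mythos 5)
The paper does not actually prove this proposition: it is quoted verbatim from Hu and Keel \cite[Proposition 1.11]{hk}, so there is no in-paper argument to compare against. Your sketch reconstructs, correctly in outline, essentially the argument of the cited source: the finite rational polyhedral Mori chamber decomposition of $\overline{\mathrm{Eff}}(X)$ encoded by conditions (i)--(iii), the segment $D_t=(1-t)A+tD$ from an ample class, wall-crossings inside $\mathrm{Mov}(X)$ producing flips and wall-crossings at its boundary producing divisorial contractions (or fiber-type contractions once the segment leaves the pseudo-effective cone, when $D$ is not pseudo-effective), and termination from the finiteness of the chambers. The two obstacles you flag are resolved exactly along the lines you indicate: taking $A$ general ensures the segment meets walls only at general points of codimension-one faces, so the semiample class $D_{t_0}$ defines the contraction of a single extremal ray, and when that contraction is small its flip exists by the finite generation built into the Mori dream space hypothesis, with the target again a $\mathbb{Q}$-factorial Mori dream space by condition (iii). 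The one place where your wording should be tightened is termination: ``the $D$-value strictly decreases across each wall'' is not quite the right invariant; the clean statement is that each chamber is convex, hence meets the segment $\{D_t\}$ in a connected subsegment, so no chamber among the finitely many can be visited twice as $t$ increases.
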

We will refer to such a sequence as a $D$-MMP.

The following known result is a useful source of  Mori dream spaces:

\begin{prop}[cf. {\cite[Theorem 3.2]{brown-Fano}}]\label{split-mds}
Let $X$ be a $\mathbb Q$-factorial projective variety which is a Mori dream space and let $L_1,\dots,L_m$ be Cartier divisors on $X$. 
Let 
$$\mathcal E=\bigoplus_{i=1}^m\mathcal O_X (L_i)$$
and let $f\colon Y=\mathbb P(\mathcal E)\to X$ be the corresponding projective bundle. 

Then $Y$ is also a Mori dream space.
\end{prop}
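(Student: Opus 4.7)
My plan is to reduce the statement to a finite generation question for the Cox ring, via the Hu--Keel characterisation: a $\mathbb{Q}$-factorial normal projective variety $Y$ with finitely generated Picard group is a Mori dream space if and only if its Cox ring $R(Y;L_1,\dots,L_r)$ (with $L_1,\dots,L_r$ a basis of $\mathrm{Pic}_{\mathbb{Q}}\,Y$) is finitely generated over the base field. Assuming this, the proof splits into three pieces: (a) $Y$ is $\mathbb{Q}$-factorial with finitely generated Picard group; (b) the Cox ring of $Y$ admits an explicit presentation in terms of the Cox ring of $X$; (c) $\mathrm{Cox}(Y)$ is finitely generated.

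For (a), since $f\colon Y\to X$ is a smooth projective bundle, the projective bundle formula gives $\mathrm{Pic}(Y)\cong f^\ast\mathrm{Pic}(X)\oplus \mathbb{Z}\,\xi$, where $\xi=[\mathcal O_Y(1)]$, and $\mathbb{Q}$-factoriality of $Y$ follows immediately from that of $X$. The first assertion gives finite generation of $\mathrm{Pic}(Y)$ from the Mori dream space assumption on $X$.

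For (b) and (c), I would compute the Cox ring of $Y$ using the standard pushforward formula $f_\ast \mathcal O_Y(n)=\mathrm{Sym}^n\mathcal E$ for $n\ge 0$ (and zero otherwise), together with the decomposition
$$\mathrm{Sym}^n\mathcal E \;\cong\; \bigoplus_{\alpha\in\mathbb{Z}_{\ge 0}^m,\,|\alpha|=n} \mathcal O_X\Bigl(\sum_{i=1}^m \alpha_i L_i\Bigr).$$
This yields, for every $M\in \mathrm{Pic}(X)$ and $n\ge 0$,
$$H^0\bigl(Y,\,f^\ast\mathcal O_X(M)\otimes \mathcal O_Y(n)\bigr)\;\cong\;\bigoplus_{|\alpha|=n}H^0\Bigl(X,\,\mathcal O_X\bigl(M+\textstyle\sum_i \alpha_i L_i\bigr)\Bigr).$$
Reindexing via $M'=M+\sum\alpha_i L_i$, summing over the whole Picard grading, and using that $\mathcal O_Y(1)\otimes f^\ast\mathcal O_X(-L_i)$ has a canonical section $z_i$ coming from the summand $L_i\hookrightarrow \mathcal E$, this identifies
$$\mathrm{Cox}(Y)\;\cong\;\mathrm{Cox}(X)[z_1,\dots,z_m],$$
where $\deg z_i=\xi-[L_i]\in\mathrm{Pic}(Y)$. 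Since $X$ is a Mori dream space, $\mathrm{Cox}(X)$ is finitely generated, hence so is $\mathrm{Cox}(Y)$; applying Hu--Keel concludes that $Y$ is a Mori dream space.

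The main obstacle is really the verification of the Cox ring isomorphism in step (b): one must check that the natural map from the polynomial algebra $\mathrm{Cox}(X)[z_1,\dots,z_m]$ into $\mathrm{Cox}(Y)$, sending $z_i$ to the tautological section of $\mathcal O_Y(1)\otimes f^\ast \mathcal O_X(-L_i)$, is a bijection in each multidegree. This reduces to the decomposition of symmetric powers above, which is elementary since $\mathcal E$ is a sum of line bundles; no further machinery is needed.
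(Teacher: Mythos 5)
Your proposal is correct and follows essentially the same route as the paper: both compute $H^0(Y, f^*M\otimes\mathcal O_Y(n))$ via $f_*\mathcal O_Y(n)=\mathrm{Sym}^n\mathcal E=\bigoplus_{|\alpha|=n}\mathcal O_X(\sum_i\alpha_iL_i)$ and deduce finite generation of the Cox ring of $Y$ from that of $X$. The only cosmetic difference is that you identify $\mathrm{Cox}(Y)$ with the full polynomial ring $\mathrm{Cox}(X)[z_1,\dots,z_m]$ by assigning $\deg z_i=\xi-[L_i]$, whereas the paper realises the section ring of $Y$ as a graded direct summand (``splitting subring'') of $R(X;H_1,\dots,H_k)[t_1,\dots,t_m]$; both package the same computation.
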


\begin{proof} Let $\{H_i\}_{i=1}^{k}$ be a basis of $\mathrm{Pic}_{\mathbb{Z}}\,X$. 
Then we can write 
$$L_i=\sum a_{i,j}H_j,$$
where $a_{i,j}$  are integers. Let $S$ be the tautological section of $\mathcal E$, i.e. the section corresponding to $\mathcal{O}_Y(1).$ Then for any $b_0, \dots, b_k \in \mathbb{Z}$, it follows that
$$\begin{aligned}
H^0(Y,b_0S+\sum_j b_j f^* H_j) \simeq &\bigoplus_{d_1+\cdots +d_{m}=b_0,\ d_j\geq 0}H^0(X,  \sum_{j}{d_j}L_j +  \sum_{i} b_iH_i )t_1^{d_1}\cdots t_{m}^{d_{m}}\\
&=\bigoplus_{d_1+\cdots +d_{m}=b_0,\ d_j\geq 0}H^0(X,  \sum_{i} (b_i+\sum_j a_{i,j}) H_i )t_1^{d_1}\cdots t_{m}^{d_{m}}
\end{aligned}
$$
when $b_0 \geq0$ and
$$H^0(Y,b_0S+\sum_{j=1}^{k} b_j f^* H_j)=0$$
when  $b_0 <0$.

Thus 
$R(Y;S,f^*H_1, \dots, f^*H_k )$
is  a splitting subring of 
$$R(X; H_1, \dots, H_k )[t_1,\dots, t_{m}].$$ Since   $R(Y; S,f^*H_1, \dots, f^*H_k )$ is finitely generated and $$\{S,f^*H_1, \dots, f^*H_k\}$$ is a basis of $\mathrm{Pic}_{\mathbb{Z}}\,Y$, we have that  $Y$ is also a Mori dream space.
\end{proof}

\subsection{Varieties of Fano type and of Calabi--Yau type}

\begin{defi}[cf. {\cite[Lemma-Definition 2.6]{ps}}]\label{Fano pair}Let $X$ be a projective normal variety  and $\Delta$ be an effective $\mathbb{Q}$-divisor on $X$ such that $K_X+\Delta$ is $\mathbb{Q}$-Cartier. 
\begin{enumerate}[(i)]
\item We say that $(X,\Delta)$ is a {\em log Fano pair} if $-(K_X+\Delta)$ is ample and $(X, \Delta)$ is klt. 
We say that $X$ is of {\em Fano type} if there exists an effective $\mathbb Q$-divisor $\Delta$ on $X$ such that $(X,\Delta)$ is a log Fano pair. 
\item  We say that $(X,\Delta)$ is a {\em log Calabi--Yau pair} if $K_X+\Delta  \sim_{\mathbb{Q}} 0$ and $(X, \Delta)$ is log canonical. 
We say that $X$ is of \textit{Calabi--Yau type} if 
there exits an effective $\mathbb{Q}$-divisor $\Delta$ such that $(X, \Delta)$ is log Calabi--Yau.  
\end{enumerate}
\end{defi}

\begin{rem}\label{rm-weakfano} If there exists an effective $\mathbb{Q}$-divisor $\Delta$ on $X$ such that $(X,\Delta)$ is klt and $-(K_X+\Delta)$ is nef and big, then $X$ is of Fano type. See \cite[Lemma-Definition 2.6]{ps}.
\end{rem}

Similarly to Proposition \ref{split-mds}, we have: 

\begin{prop}\label{split vec}
Let $X$ be a projective variety of Fano type and let $L_1,\dots,L_m$ be Cartier divisors  on $X$. 
Let 
$$\mathcal E=\bigoplus_{i=1}^m \mathcal O_X(L_i)$$
and let $f\colon Y=\mathbb P(\mathcal E)\to X$ be the corresponding projective bundle. 

Then $Y$ is also of Fano type 
\end{prop}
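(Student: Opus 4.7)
The plan is to produce an effective $\mathbb Q$-divisor $\Delta_Y$ on $Y$ such that $(Y,\Delta_Y)$ is klt and $-(K_Y+\Delta_Y)$ is ample; by Remark \ref{rm-weakfano} this suffices to prove that $Y$ is of Fano type. By hypothesis there is $\Delta\geq 0$ on $X$ with $(X,\Delta)$ klt and $-(K_X+\Delta)$ ample. For each $i=1,\dots,m$, the inclusion $\bigoplus_{j\ne i}\mathcal O_X(L_j)\hookrightarrow\mathcal E$ cuts out a prime divisor $D_i=\mathbb P(\bigoplus_{j\ne i}\mathcal O_X(L_j))\subset Y$; writing $S$ for the tautological class of $\mathcal O_Y(1)$, the relative Euler sequence gives $D_i\sim S-f^*L_i$ and $K_{Y/X}\sim -mS+f^*c_1(\mathcal E)$, which combine to the identity
\[
K_Y+\sum_{i=1}^m D_i\sim f^*K_X.
\]

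For small $\epsilon>0$ I would then set $\Delta_Y:=f^*\Delta+(1-\epsilon)\sum_{i=1}^m D_i$, so that
\[
-(K_Y+\Delta_Y)\sim_{\mathbb Q}f^*\bigl(-(K_X+\Delta)-\epsilon\,c_1(\mathcal E)\bigr)+\epsilon mS.
\]
For $\epsilon$ small enough the class $A_\epsilon:=-(K_X+\Delta)-\epsilon\,c_1(\mathcal E)$ remains ample on $X$. Since $\mathcal O_Y(1)$ is $f$-ample, the standard fact that an $f$-ample class becomes absolutely ample after adding a sufficiently positive pullback of an ample class yields an integer $N$ with $S+Nf^*A_\epsilon$ ample; shrinking $\epsilon$ further so that $\epsilon mN\leq 1$, the decomposition
\[
-(K_Y+\Delta_Y)\sim_{\mathbb Q}\epsilon m(S+Nf^*A_\epsilon)+(1-\epsilon mN)f^*A_\epsilon
\]
expresses the left-hand side as a sum of an ample and a nef class, hence it is ample.

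To verify that $(Y,\Delta_Y)$ is klt I would take a log resolution $\pi\colon\widetilde X\to X$ of $(X,\Delta)$ and form the smooth split projective bundle $\widetilde Y=\mathbb P(\pi^*\mathcal E)\to\widetilde X$. The induced birational morphism $\pi_Y\colon\widetilde Y\to Y$ arises by base change of $\pi$ along $f$, and applying the projective-bundle canonical formula on both sides yields $K_{\widetilde Y}-\pi_Y^*K_Y=\widetilde f^*(K_{\widetilde X}-\pi^*K_X)$. Since the defining section of $D_i$ pulls back to a section vanishing exactly along the strict transform, $\pi_Y^*D_i=\widetilde D_i$, so
\[
\pi_Y^*(K_Y+\Delta_Y)-K_{\widetilde Y}=\widetilde f^*\bigl(\pi^*(K_X+\Delta)-K_{\widetilde X}\bigr)+(1-\epsilon)\sum_{i=1}^m\widetilde D_i,
\]
whose coefficients are all strictly less than $1$ (by klt-ness of $(X,\Delta)$ and by $\epsilon>0$) and whose support is snc (the $\widetilde D_i$'s form the pulled-back toric boundary of $\mathbb P^{m-1}$ fiberwise, and are transverse to horizontal divisors). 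The main technical point is thus the identification of $\pi_Y^*K_Y$ and of $\pi_Y^*D_i$ via the projective-bundle formula; once these are in place the klt check becomes a routine discrepancy calculation, and the ampleness argument is entirely formal.
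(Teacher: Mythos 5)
Your proof is correct, and it rests on the same key identity as the paper's --- namely $K_Y+\sum_i D_i\sim f^*K_X$ for the divisors $D_i$ cut out by the summands of $\mathcal E$ (the paper's $T_i$) --- but it implements the perturbation differently. The paper first twists $\mathcal E$ so that the $L_i$ become very ample, making the tautological class $S$ ample on $Y$; it then writes $\Delta=A+B$ with $A$ ample and uses the relation $2S\sim\sum_i(T_i+f^*L_i)$ to exhibit an effective klt boundary $\Gamma\sim_{\mathbb Q}\sum_iT_i-\varepsilon S+f^*\Delta$, so that $-(K_Y+\Gamma)\sim_{\mathbb Q}-f^*(K_X+\Delta)+\varepsilon S$ is visibly ample. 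You instead keep the manifestly effective boundary $f^*\Delta+(1-\epsilon)\sum_iD_i$, move the positive term $\epsilon mS$ to the anti-log-canonical side, and deduce ampleness from the relative ampleness of $\mathcal O_Y(1)$; klt-ness is then verified by an explicit simultaneous log resolution rather than by taking general members of ample linear systems. Your route avoids the twisting step and the decomposition $\Delta=A+B$, at the cost of invoking the (standard) fact that an $f$-ample class plus a sufficiently large pullback of an ample class is ample; the one point to tidy is that $N$ should be fixed for a single ample class, e.g.\ for $\tfrac12\bigl(-(K_X+\Delta)\bigr)$, before shrinking $\epsilon$, so that the choices of $N$ and $\epsilon$ are not circular. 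Both arguments are complete and of comparable length.
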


\begin{proof}
By assumption, there exist a klt pair $(X,\Delta)$ such that $-(K_X+\Delta)$ is ample. We may assume that $\Delta=A+B$ where $A$ is an ample $\mathbb Q$-divisor and $(X,B)$ is klt. 

After tensoring by a suitable ample divisor on $X$, we may assume that $L_1,\dots,L_m$ are very ample divisors on $X$. Let $S$ be the tautological section of $\mathcal E$, i.e. the section corresponding to $\mathcal O_Y(1)$. Since $L_1,\dots,L_m$ are ample, it follows that $S$ is ample on $Y$. 

Let $T_1,\dots,T_m$ be the divisors in $Y$ given by the summands of $\mathcal E$. 
Then 
 $$2S\sim \sum_{i=1}^m (T_i +f^*L_i).$$

Fix  a sufficiently small rational number $\delta >0$ such that $A-\delta\sum_{i=1}^m f^*L_i$ is ample.
Then for any sufficiently small rational number $\varepsilon >0$, there exists a $\mathbb Q$-divisor $\Gamma\ge 0$  on $Y$ such that
$(Y,\Gamma)$ is klt and 
\begin{eqnarray*}
\sum_{i=1}^m T_i -\varepsilon S + f^*\Delta &=&\sum_{i=1}^m T_i +f^*A -\varepsilon S + f^*B   \\
  &\sim_{ \mathbb Q}&  {{(1-\delta)\sum_{i=1}^m T_i + (2\delta - \varepsilon)S +f^*(A-\delta\sum_{i=1}^m L_i )+f^*B  }}\\
  & \sim_{ \mathbb Q}&\Gamma
 \end{eqnarray*}
 In addition, we have
$$K_Y= -\sum_{i=1}^m T_i+f^*K_X$$
and it follows that
$$-(K_Y+\Gamma)\sim_{\mathbb Q} -(K_Y+\sum_{i=1}^m T_i )-f^*\Delta+\varepsilon S=-f^*(K_X+\Delta)+\varepsilon S.$$
Thus, if $\varepsilon$ is sufficiently small, then $(Y,\Gamma)$ is log Fano and the claim follows. 
\end{proof}

\begin{prop}\label{split-cy}
Let $X$ be a projective variety of Calabi--Yau type and let $L_1,\dots,L_m$ be Cartier divisors on $X$. 
Let 
$$\mathcal E=\bigoplus_{i=1}^k \mathcal O_X(L_i)$$
and let $f\colon Y=\mathbb P(\mathcal E)\to X$ be the corresponding projective bundle. 

Then $Y$ is also of Calabi--Yau type 
\end{prop}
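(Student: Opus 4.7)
The plan is to adapt the construction from the proof of Proposition \ref{split vec}, with the simplification that no ampleness needs to be enforced (so no $\varepsilon S$ perturbation is required), but with the extra subtlety that strict log canonicity must be checked, since coefficients equal to one are now allowed in the boundary and cannot simply be perturbed away. Let $(X,\Delta)$ be a log Calabi--Yau pair and let $T_1,\dots,T_m$ be the divisors of $Y$ given by the summands of $\mathcal E$. I would set
$$\Gamma:=\sum_{i=1}^m T_i + f^*\Delta\ge 0.$$
Using the relation $K_Y=-\sum_{i=1}^m T_i+f^*K_X$ recorded in the proof of Proposition \ref{split vec}, an immediate calculation gives
$$K_Y+\Gamma \;\sim_{\mathbb Q}\; f^*(K_X+\Delta)\;\sim_{\mathbb Q}\; 0,$$
and $K_Y+\Gamma$ is $\mathbb Q$-Cartier since $K_X+\Delta$ is. So everything reduces to showing that $(Y,\Gamma)$ is log canonical.

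For this main step, I would exploit the fact that $\mathcal E=\bigoplus_{i=1}^m\mathcal O_X(L_i)$, being a direct sum of line bundles, is Zariski-locally trivial on $X$. Over any open $U\subset X$ on which all the $L_i$ trivialise, $f^{-1}(U)$ is canonically identified with $U\times\mathbb P^{m-1}$ in such a way that each $T_i$ restricts to $U\times H_i$, where $H_1,\dots,H_m$ are the coordinate hyperplanes (the toric boundary) of $\mathbb P^{m-1}$, while $f^*\Delta|_{f^{-1}(U)}$ becomes $\Delta|_U\times\mathbb P^{m-1}$. Hence locally the pair $(Y,\Gamma)$ is the box-sum of the log canonical pair $(U,\Delta|_U)$ with the log Calabi--Yau toric pair $(\mathbb P^{m-1},\sum H_i)$, and log canonicity is preserved under forming such products of log canonical pairs; since log canonicity is local on $X$, this yields that $(Y,\Gamma)$ is log canonical.

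The main obstacle is precisely this log canonicity verification at points where $X$ is singular, since the coefficient-one components $T_i$ preclude any straightforward reduction to the klt situation. The cleanest way to make the product argument rigorous is to base-change along a log resolution $\mu\colon \widetilde X\to X$ of $(X,\Delta)$: the fibre product $\widetilde Y:=Y\times_X\widetilde X=\mathbb P(\mu^*\mathcal E)$ is again a split $\mathbb P^{m-1}$-bundle, hence smooth, the natural morphism $\widetilde Y\to Y$ is projective and birational, and by the local product description above the pullback of $\Gamma$ becomes a simple normal crossing divisor on $\widetilde Y$ with all coefficients at most one; the log canonicity of $(Y,\Gamma)$ can then be read off directly from the discrepancy formula on $\widetilde Y$.
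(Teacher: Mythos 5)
Your proof is correct and takes essentially the same route as the paper's: both set $\Gamma=\sum_{i=1}^m T_i+f^*\Delta$ and deduce $K_Y+\Gamma\sim_{\mathbb Q}f^*(K_X+\Delta)\sim_{\mathbb Q}0$ from $K_Y=-\sum_{i=1}^m T_i+f^*K_X$. The only difference is that the paper simply asserts that $(Y,\Gamma)$ is log canonical, whereas you justify this via the local product structure and the base-changed log resolution $\mathbb P(\mu^*\mathcal E)\to Y$, which is a correct way to fill in the detail the paper leaves implicit.
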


\begin{proof}
By assumption, there exist a log canonical pair $(X,\Delta)$ such that $K_X+\Delta \sim_{\mathbb Q} 0$. 

Let $T_1,\dots,T_m$ be the divisors in $Y$ given by the summands of $\mathcal E$. Then there exist a $\mathbb Q$-divisor $\Gamma\ge 0$  on $Y$ such that
$(Y,\Gamma)$ is log canonical and 
$$\Gamma\sim_{\mathbb Q}\sum_{i=1}^m T_i  + f^{*}\Delta .$$
 In addition, we have
$$K_Y= -\sum_{i=1}^m T_i+f^{*}K_X$$
and it follows that
$$K_Y+\Gamma\sim_{\mathbb Q}f^*(K_X+\Delta).$$
Thus, $(Y,\Gamma)$ is log Calabi--Yau and the claim follows. 
\end{proof}

\section{The anti-canonical model of a  variety of Fano type }\label{s_proof}

We begin with a generalisation of Theorem \ref{t-intro}:

\begin{thm}\label{thm1}Let $(X,\Delta)$ be a projective log pair over an algebraic closed field $k$ of characteristic zero, 
such that $K_X+\Delta$ is $\mathbb Q$-Cartier.

Then there exists a $
\mathbb Q$-divisor $\Delta'\ge 0$ such that $(X,\Delta+\Delta')$ is  a log Fano pair if and only if   $R(X, -m(K_X+\Delta))$ is a finitely generated $k$-algebra for some $m \in \mathbb{N}$, 
the map 
$$\varphi: X \dashrightarrow Y:= \mathrm{Proj}\,R(X,-m(K_X+\Delta))$$ is birational and the pair $(Y, \Gamma=\varphi_*
\Delta)$ is klt. 

\end{thm}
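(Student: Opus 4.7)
I prove the two directions separately; the forward direction uses BCHM and an MMP on the Mori dream space $X$, while the backward direction combines a discrepancy comparison on a common resolution with a lifting-and-perturbation construction.

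\emph{Forward direction.} Assume $(X,\Delta+\Delta')$ is a klt log Fano pair. Then $X$ is of Fano type, hence a Mori dream space by BCHM; in particular $R(X,-m(K_X+\Delta))$ is finitely generated for some $m$. Moreover $-(K_X+\Delta)=-(K_X+\Delta+\Delta')+\Delta'$ is the sum of an ample and an effective divisor, hence big, so $\varphi\colon X\dashrightarrow Y$ is birational. For the kltness of $(Y,\Gamma)$, I pick a general $H\sim_{\mathbb Q}-(K_X+\Delta+\Delta')$ with sufficiently small coefficients and set $\widetilde\Delta:=\Delta+\Delta'+H$; then $(X,\widetilde\Delta)$ is a klt log Calabi--Yau pair ($K_X+\widetilde\Delta\sim_{\mathbb Q}0$). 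I then run the $-(K_X+\Delta)$-MMP on the MDS $X$, well-defined by Proposition~\ref{mmp on mds}: every step is $(K_X+\widetilde\Delta)$-trivial, hence crepant for $(X,\widetilde\Delta)$ by the negativity lemma, so kltness of $(X,\widetilde\Delta)$ is preserved throughout. The MMP ends with the anticanonical morphism to $Y$, and pushing forward yields the klt pair $(Y,\varphi_*\widetilde\Delta)$; since $\Gamma\le\varphi_*\widetilde\Delta$ is effective, $(Y,\Gamma)$ will be klt.

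\emph{Backward direction.} By construction of $Y$ as $\mathrm{Proj}$, $-(K_Y+\Gamma)$ is ample, so $(Y,\Gamma)$ is klt log Fano and $Y$ is of Fano type. On a common log resolution $p\colon W\to X$, $q\colon W\to Y$ of $\varphi$, the ample-model property yields an effective $q$-exceptional $\mathbb Q$-divisor $F$ on $W$ with $p^*(K_X+\Delta)=q^*(K_Y+\Gamma)-F$. Matching coefficients in the two discrepancy formulas for $K_W$ gives
\[
a(E,X,\Delta)=a(E,Y,\Gamma)+\mult_E(F)\ge a(E,Y,\Gamma)>-1
\]
for every prime divisor $E$ over $X$, so $(X,\Delta)$ is klt. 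To construct $\Delta'$, I first lift a log Calabi--Yau structure from $Y$: pick a general $H\sim_{\mathbb Q}-(K_Y+\Gamma)$ with small coefficients so that $(Y,\Gamma+H)$ is klt log Calabi--Yau, and define on $X$
\[
\Theta:=\varphi^{-1}_{*}H+p_{*}F+\sum_i c_i E_i,
\]
where the $E_i$ are the $\varphi$-exceptional prime divisors of $X$ and $c_i$ is the coefficient of the strict transform of $E_i$ in $q^*H$. A direct computation on $W$ shows $K_X+\Delta+\Theta\sim_{\mathbb Q}0$, and by kltness of $(Y,\Gamma+H)$ together with effectiveness of $F$ one verifies that $(X,\Delta+\Theta)$ is klt. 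Finally I upgrade to log Fano by perturbation: set $\Delta':=(1-\epsilon)\Theta+\sum_i\eta_iE_i$ with $0<\epsilon\ll1$ and small $\eta_i>0$, so that
\[
-(K_X+\Delta+\Delta')\sim_{\mathbb Q}\epsilon\Theta-\sum_i\eta_iE_i.
\]
The negative definiteness of the intersection matrix of the $\{E_i\}$ on each $\varphi$-exceptional locus allows one to choose the $\eta_i$ so that this class is ample by Nakai--Moishezon, and for all parameters small enough $(X,\Delta+\Delta')$ remains klt.

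\emph{Main obstacle.} The hardest step will be this final ampleness perturbation in the backward direction: the naive lift $\Theta$ produces only a semi-ample class (pulled back from ample on $Y$), so whenever $\varphi$ contracts divisors of $X$ one must subtract small multiples of the exceptional divisors to create positivity on the contracted curves, while keeping $\Delta'$ effective and $(X,\Delta+\Delta')$ klt. This balancing act relies crucially on the negative definiteness of the exceptional intersection form and on the klt-discrepancy bounds inherited from $(Y,\Gamma)$.
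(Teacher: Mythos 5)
Your forward direction is essentially the paper's argument: the paper runs a $D$-MMP for $D\sim_{\mathbb Q}-(K_X+\Delta)$ after first passing to a small $\mathbb Q$-factorialization of $X$ via \cite[Corollary 1.4.3]{bchm}, a step you should add, since the theorem does not assume $X$ is $\mathbb Q$-factorial and both the Mori dream space property and Proposition \ref{mmp on mds} require it; your crepant Calabi--Yau bookkeeping with $\widetilde\Delta$ is a fine substitute for the paper's use of $(X,\Delta+\Delta'+\varepsilon D)$. In the backward direction, your construction of the klt log Calabi--Yau pair $(X,\Delta+\Theta)$ by pushing $q^*H+F$ down from a common resolution is a legitimate alternative to the paper's route, which instead extracts the $\varphi$-exceptional divisors over $Y$ (possible because their discrepancies with respect to $(Y,\Gamma)$ are at most $0$) to obtain a $\mathbb Q$-factorial model $Y'$ isomorphic to $X$ in codimension one and then transfers the Calabi--Yau structure across that small map. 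Your version works because $p^*(K_X+\Delta+\Theta)-q^*(K_Y+\Gamma+H)$ is $p$-exceptional and $\mathbb Q$-linearly trivial, hence vanishes by the negativity lemma, so the two pairs are crepant; you should say this rather than ``one verifies''.

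The final step of your backward direction, however, contains a genuine gap. Taking $\Delta'=(1-\epsilon)\Theta+\sum_i\eta_iE_i$ and trying to make $\epsilon\Theta-\sum_i\eta_iE_i$ ample by subtracting $\varphi$-exceptional divisors cannot work in general. If $\varphi$ is small (for instance when the anticanonical model is reached only after flips, or when $-(K_X+\Delta)$ is nef and big but not ample and contracts no divisor), there are no $E_i$ at all and $\epsilon\Theta$ is at best nef and big, never ample; and even when $\varphi$ contracts divisors, $\Theta\sim_{\mathbb Q}-(K_X+\Delta)$ may be negative on flipping-type curves not contained in any $E_i$, where subtracting $\eta_iE_i$ only decreases the degree further. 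Moreover, ``negative definiteness of the intersection matrix of the $E_i$'' is a surface phenomenon with no analogue for divisorial exceptional loci in dimension at least three. The correct perturbation exploits the bigness of $\Theta$, not the exceptional locus: by Kodaira's lemma write $\Theta\sim_{\mathbb Q}A+E$ with $A$ ample and $E\ge 0$, and set $\Delta'=(1-\delta)\Theta+\delta E$, so that $-(K_X+\Delta+\Delta')\sim_{\mathbb Q}\delta A$ is ample and $(X,\Delta+\Delta')$ remains klt for $0<\delta\ll 1$. This is precisely the content of \cite[Lemma 2.4]{bir-fano}, which is where the paper closes this step.
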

\begin{proof} 
We first prove the "only if" part. To this end, we follow some of the arguments in \cite{gost}. 
Assume that the pair $(X,\Delta+\Delta')$ is log Fano, for some $\mathbb Q$-divisor $\Delta'\ge 0$.
After possibly replacing $X$ by its $\mathbb Q$-factorization  \cite[Corollary 1.4.3]{bchm}, we may assume that $X$ is $\mathbb Q$-factorial. 
In particular $(X,\Delta+\Delta')$ is klt and $A:=-(K_X+\Delta+\Delta')$ is 
ample. Thus $R(X,-m(K_X+\Delta))$ is finitely generated for some positive integer $m$. In addition, $-(K_X+\Delta)$ is big and, in particular, $\varphi$ is a birational map. 
Let $D$ be an effective $\mathbb Q$-divisor such that $D\sim_{\mathbb Q}-(K_X+\Delta)$ and pick a rational number  $0< \varepsilon \ll 1$ such that $(X,\Delta+\Delta'+\varepsilon D)$ is klt. Then
$$\varepsilon D= K_X+\Delta+\Delta'+A+\epsilon D$$
and by  \cite{bchm}, we can run a $D$-MMP and obtain a contraction map $\psi\colon X\dashrightarrow X'$ such that 
 $\psi_* D$ is big and semi-ample.  In particular $(X',\psi_*\Delta)$ is klt. Note that $\varphi$ factors through $\psi$. Let $\eta\colon X'\to Y$ be the induced morphism. Then, $(Y,\Gamma=\eta_*\psi_*\Delta)$ is klt.

Now we show the  "if" part. By assumption 
$$\varphi: X \dashrightarrow Y:= \mathrm{Proj}\,R(X,-m(K_X+\Delta))$$
is a birational contraction and $-(K_Y+\Gamma)=-\varphi_*(K_X+\Delta)$ is $\mathbb Q$-Cartier  
by \cite[Lemma 1.6]{hk}. The Lemma  also implies that  $-(K_Y+\Gamma)$ is ample and 
$$-(K_X+\Delta) = -\varphi^*(K_{Y}+\Gamma) +E,$$
where $E\ge 0$ is $\varphi$-exceptional, and  $\varphi^*$ is the pullback by birational contractions. 
 Thus, it follows that  the discrepancies of  $(Y,\Gamma)$ along the components of $\varphi$-divisors are at most $0$. 
 Since $(Y,\Gamma)$ is klt, by  \cite[Corollary 1.4.3]{bchm} applied to 
 $$\mathfrak{E}=\{F  \mid F  \text{ is a $\varphi$-exceptional divisors on $X$}\}$$ 
 there exists a  birational morphism $f:Y' \to Y$ satisfying:

\begin{enumerate}
\item[(1)] $Y'$ is a $\mathbb{Q}$-factorial normal variety,
\item[(2)]\label{l2}  $Y'$ and $X$ are isomorphic in codimension one, and
\item[(3)] there exists a $\mathbb Q$-divisor $\Gamma'$ such that $f^*(K_Y+\Gamma)=K_{Y'}+\Gamma'$ and $(Y',\Gamma')$ is klt.
  \end{enumerate}
   In particular $Y'$ is of Fano type.  Thus, there exists a big $\mathbb Q$-divisor $\Theta\ge 0$ such that $(Y',\Gamma'+\Theta)$ is klt and $K_{Y'}+\Gamma'+\Theta\sim_{\mathbb Q}0$.

 Let $\Theta'$ be the strict transform of $\Theta$ in $X$. Then $(X,\Delta+\Theta')$ is klt, $\Theta'$ is big and $K_X+\Delta+\Theta'\sim_{\mathbb R}0$. Thus, it follows easily from (2) that there exists a $\mathbb Q$-divisor $\Delta'$ such that $(X,\Delta+\Delta')$ is  a log Fano pair  (e.g. see \cite[Lemma\,2.4]{bir-fano}). 
\end{proof}

\medskip 

As an immediate consequence of the previous Theorem, we obtain:
\begin{cor}\label{char by lt-ness}Let $X$ be a $\mathbb{Q}$-Gorenstein normal variety.

Then $X$ is of Fano type if and only if   $-K_X$ is big, $R(-K_X)$ is finitely generated, and $\mathrm{Proj}\, R(-K_X)$ has log terminal singularities.
\end{cor}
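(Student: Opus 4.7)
The plan is to apply Theorem \ref{thm1} to the trivially boundary-free pair $(X, \Delta = 0)$. Since $X$ is $\mathbb Q$-Gorenstein, $K_X$ is $\mathbb Q$-Cartier, so the hypothesis of Theorem \ref{thm1} is satisfied. Under this specialisation, the theorem asserts that there exists an effective $\mathbb Q$-divisor $\Delta' \ge 0$ such that $(X, \Delta')$ is a log Fano pair if and only if $R(X, -mK_X)$ is a finitely generated $k$-algebra for some $m \in \mathbb N$, the induced map $\varphi : X \dashrightarrow Y := \mathrm{Proj}\, R(X, -mK_X)$ is birational, and $(Y, 0)$ is klt. By Definition \ref{Fano pair}, the left-hand side of this equivalence is precisely the statement that $X$ is of Fano type.

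It therefore remains to check that the three conditions appearing on the right-hand side of Theorem \ref{thm1} match, one-for-one, the three conditions in the statement of the corollary. First, finite generation of $R(X, -mK_X)$ for some $m \in \mathbb N$ is equivalent to finite generation of $R(-K_X)$, since these rings are related by a Veronese construction and a positively graded $k$-algebra is finitely generated if and only if some (equivalently, every) Veronese subalgebra is; in particular $\mathrm{Proj}\, R(-K_X) = \mathrm{Proj}\, R(X, -mK_X)$, so the target of $\varphi$ is the same variety in either formulation. Second, under finite generation, the dimension of $\mathrm{Proj}\, R(-K_X)$ coincides with the Iitaka dimension $\kappa(X, -K_X)$, so birationality of $\varphi$ is equivalent to $\kappa(X, -K_X) = \dim X$, which is by definition the bigness of $-K_X$. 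Finally, the condition that the pair $(Y, 0)$ be klt is, by Definition \ref{sing of pairs}, precisely the requirement that $Y$ have log terminal singularities.

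With this dictionary in place, the "only if" and "if" directions of the corollary follow at once from the corresponding directions of Theorem \ref{thm1}. I expect no real obstacle in this argument: the entire content has been absorbed into Theorem \ref{thm1}, and what remains is a transparent unpacking of the $\Delta = 0$ case together with the elementary identifications between bigness, Veronese finite generation, and klt versus log terminal singularities.
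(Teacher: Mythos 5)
Your proposal is correct and matches the paper exactly: the paper states this corollary as an immediate consequence of Theorem \ref{thm1} applied with $\Delta=0$, and your unpacking of the dictionary (Veronese finite generation, birationality of $\varphi$ versus bigness of $-K_X$, and klt of $(Y,0)$ versus log terminal singularities of $Y$) is precisely the intended argument.
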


\begin{rem}
Recall that if $X$ is a normal variety, $\Delta$ is an effective $\mathbb Q$-divisor such that $L=r(K_X+\Delta)$ is ample  for some $r\in \mathbb Q$ and $C$ is the cone defined by $L$, then $(X,\Delta)$ is log Fano (in particular $r<0$) if and only if $(C,\Gamma)$ is klt, where $\Gamma$ is the corresponding $\mathbb Q$-divisor on $C$ (e.g. see \cite[Lemma 3.1]{kk13}).
\end{rem}

\medskip

An analogue result as Theorem \ref{thm1} holds for log Calabi-Yau pairs:

\begin{thm}\label{thm2}
Let $(X,\Delta)$ be a projective log pair over an algebraic closed field $k$ of characteristic zero. 
Assume that $R(X, -m(K_X+\Delta))$ is a finitely generated $k$-algebra for some $m \in \mathbb{N}$ and that the map 
$$\varphi: X \dashrightarrow Y:= \mathrm{Proj}\,R(X,-m(K_X+\Delta))$$
is a birational contraction.   

Then, there exists a $\mathbb Q$-divisor $\Delta'\ge 0$ such that $(X,\Delta+\Delta')$ is  a log Calabi--Yau pair if and only if 
the pair $(Y, \Gamma=\varphi_*\Delta)$ has only log canonical singularities. 

\end{thm}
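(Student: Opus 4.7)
The plan is to reduce both implications to a \emph{discrepancy-matching identity}: for every $\mathbb{Q}$-divisor $\Delta'\ge 0$ on $X$ with $K_X+\Delta+\Delta'\sim_{\mathbb{Q}}0$, the pairs $(X,\Delta+\Delta')$ and $(Y,\Gamma+\varphi_*\Delta')$ will have the same discrepancy along every prime divisor $F$ over $X$. Given this identity, the \emph{only if} direction is immediate: the log canonicity of $(X,\Delta+\Delta')$ transfers to $(Y,\Gamma+\varphi_*\Delta')$, and $\Gamma\le\Gamma+\varphi_*\Delta'$ forces $(Y,\Gamma)$ itself to be log canonical. For the \emph{if} direction I would first produce on $Y$ an effective $\mathbb{Q}$-Cartier divisor $\Theta_Y\sim_{\mathbb{Q}}-(K_Y+\Gamma)$ such that $(Y,\Gamma+\Theta_Y)$ is log canonical; such a complement exists by applying Fujino's base point free theorem for log canonical pairs to the ample divisor $-(K_Y+\Gamma)$ and choosing a general member via Bertini. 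Writing $\varphi^*(K_Y+\Gamma)=K_X+\Delta+E$ with $E\ge 0$ $\varphi$-exceptional as in \cite[Lemma~1.6]{hk}, the divisor $\Delta':=E+\varphi^*\Theta_Y\ge 0$ satisfies $K_X+\Delta+\Delta'=\varphi^*(K_Y+\Gamma+\Theta_Y)\sim_{\mathbb{Q}}0$ and $\varphi_*\Delta'=\Theta_Y$, so the matching identity transfers the log canonicity of $(Y,\Gamma+\Theta_Y)$ to $(X,\Delta+\Delta')$.

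To establish the matching identity, I would take a common log resolution $W$ of $X$ and $Y$ with morphisms $f\colon W\to X$ and $g\colon W\to Y$. Both $K_X+\Delta+\Delta'$ and $K_Y+\Gamma+\varphi_*\Delta'$ are $\mathbb{Q}$-Cartier and $\mathbb{Q}$-linearly trivial, so their difference
$$D:=f^*(K_X+\Delta+\Delta')-g^*(K_Y+\Gamma+\varphi_*\Delta')$$
is $\mathbb{Q}$-linearly trivial on $W$. A prime $F\subset W$ that is not $g$-exceptional is the strict transform of a prime $D_Y\subset Y$; since $\varphi$ is a birational contraction, $D_X:=\varphi^{-1}_*D_Y$ is a prime on $X$ of which $F$ is also the strict transform via $f$. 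Because strict-transform coefficients are preserved, both sides of $D$ contribute $\mathrm{coeff}_{D_X}(\Delta+\Delta')=\mathrm{coeff}_{D_Y}(\Gamma+\varphi_*\Delta')$ to $F$, giving $\mathrm{mult}_F D=0$. Thus $D$ is supported on $g$-exceptional divisors. Writing $nD=\mathrm{div}_W(s)$ for some nonzero rational function $s$, we have $\mathrm{div}_Y(s)=g_*\mathrm{div}_W(s)=g_*(nD)=0$, so $s$ is constant on the projective variety $Y$ and $D=0$. Reading off multiplicities of $D$ then yields $a(F,X,\Delta+\Delta')=a(F,Y,\Gamma+\varphi_*\Delta')$ for every prime $F$ on $W$, and hence for every valuation over $X$.

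The main technical obstacle I anticipate is the existence of the complement $\Theta_Y$ in the \emph{if} direction: outside the klt case one cannot directly invoke \cite{bchm}, and the argument relies on Fujino's extension of the base point free theorem to log canonical pairs together with a Bertini-type transversality. The matching identity itself is essentially elementary once one observes that any $g$-exceptional $\mathbb{Q}$-linearly trivial divisor on the projective resolution $W$ must vanish, and the rest of the proof organizes around it exactly as in Theorem \ref{thm1}.
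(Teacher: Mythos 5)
Your argument is correct, and while the \emph{only if} half coincides with the paper's (your ``matching identity'' is exactly Lemma \ref{lem-crep}: a common resolution plus the observation that a $\mathbb{Q}$-linearly trivial divisor that is exceptional and pushes forward to zero must vanish --- the paper phrases this via the negativity lemma, you via a rational function, which is the same thing), the \emph{if} half takes a genuinely different and shorter route. The paper cannot apply Lemma \ref{lem-crep} directly to $\varphi^{-1}\colon Y\dashrightarrow X$, since that map extracts the $\varphi$-exceptional divisors and so is not a contraction; it therefore invokes \cite[Theorem 4.1]{fujino-ss} to build a $\mathbb{Q}$-factorial crepant model $f\colon Y'\to Y$ extracting those divisors together with all lc places, so that $Y'\dashrightarrow X$ \emph{is} a birational contraction, checks that $Y'$ is of Calabi--Yau type, and only then pushes forward to $X$. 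You bypass the dlt-modification machinery entirely: you build the complement $\Theta_Y$ on $Y$ itself and transfer log canonicity back to $X$ via the symmetric crepant identity, which remains valid in that direction because both $K_X+\Delta+\Delta'$ and $K_Y+\Gamma+\varphi_*\Delta'$ are $\mathbb{Q}$-linearly trivial (so both pullbacks make sense) and because $\varphi$ being a contraction forces every $f$-exceptional divisor on the common resolution to be $g$-exceptional. Two small remarks. First, invoking Fujino's base point free theorem is redundant: $-(K_Y+\Gamma)=-\varphi_*(K_X+\Delta)$ is already \emph{ample} by \cite[Lemma 1.6]{hk}, so a general member of $|-m(K_Y+\Gamma)|$ plus Bertini on a log resolution of $(Y,\Gamma)$ gives the log canonical complement at once; the ``main technical obstacle'' you flag is therefore not an obstacle (and note the paper tacitly uses the very same Bertini step on $Y'$ when it asserts that $Y'$ is of Calabi--Yau type from semi-ampleness of $-(K_{Y'}+\Gamma')$). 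Second, in the \emph{only if} step, passing from $(Y,\Gamma+\varphi_*\Delta')$ log canonical to $(Y,\Gamma)$ log canonical requires $K_Y+\Gamma$ to be $\mathbb{Q}$-Cartier; this again comes from \cite[Lemma 1.6]{hk} and should be said explicitly. With those points made precise, your proof is complete and arguably cleaner than the one in the text.
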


We begin with the following lemma:

\begin{lem}\label{lem-crep}  Let $\mathbb{K}=\mathbb{R}$ or $\mathbb{Q}$. 
Let $(X, \Delta)$ be a log canonical projective  Calabi--Yau pair, for some $\mathbb K$-divisor $\Delta\ge 0$  and let $X'$ be a normal projective variety.  Suppose that there exists  $\varphi:X \dashrightarrow X'$ such that $\varphi$ is a birational  contraction. Then $K_{X'}+\varphi_* \Delta$ is $\mathbb{K}$-Cartier and  $(X', \varphi_*\Delta)$ is  a log canonical Calabi--Yau pair.
\end{lem}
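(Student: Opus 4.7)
The plan is to compare $(X,\Delta)$ with $(X',\varphi_*\Delta)$ through a common resolution. Choose a smooth variety $W$ dominating both $X$ and $X'$, with birational morphisms $p\colon W\to X$ and $q\colon W\to X'$ such that $q=\varphi\circ p$ as rational maps. By the log canonicity of $(X,\Delta)$ I can write
\[
K_W+\Gamma = p^*(K_X+\Delta),
\]
where $\Gamma=p_*^{-1}\Delta+\sum_i c_iF_i$ sums over the $p$-exceptional primes $F_i$ with $c_i=-a(F_i,X,\Delta)\le 1$; every coefficient of $\Gamma$ is at most $1$ by the log canonicity hypothesis.

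The key geometric input is that since $\varphi$ is a birational contraction, every $p$-exceptional prime $F$ on $W$ is automatically $q$-exceptional. Indeed, if $q(F)$ were a prime divisor $D$ on $X'$, then since $\varphi^{-1}$ extracts no divisor, $D$ would be the $\varphi$-strict transform of some non-$\varphi$-exceptional prime $D_0\subset X$; but then $p_*^{-1}D_0$ would be a second non-$q$-exceptional prime on $W$ mapping onto $D$ via the birational morphism $q$, contradicting the uniqueness of the strict transform.

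Pushing the crepant equality forward by $q$, the terms $c_iF_i$ die, and a component-by-component analysis shows $q_*(p_*^{-1}\Delta)=\varphi_*\Delta$: the $\varphi$-exceptional components of $\Delta$ lift to $q$-exceptional divisors on $W$, while the remaining components push forward to their $\varphi$-strict transforms on $X'$. Combined with $K_X+\Delta\sim_{\mathbb K} 0$, this gives
\[
K_{X'}+\varphi_*\Delta = q_*(K_W+\Gamma)\sim_{\mathbb K} q_*p^*(K_X+\Delta)\sim_{\mathbb K} 0,
\]
so $K_{X'}+\varphi_*\Delta$ is $\mathbb K$-Cartier and log Calabi--Yau numerically.

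For log canonicity, define $\Gamma''$ on $W$ by $K_W+\Gamma''=q^*(K_{X'}+\varphi_*\Delta)$. Then $\Gamma-\Gamma''$ is $q$-exceptional, because both $\Gamma$ and $\Gamma''$ have $q$-pushforward equal to $\varphi_*\Delta$, and it is $\mathbb K$-linearly equivalent to $0$. Applying the negativity lemma to $\Gamma-\Gamma''$ and to $\Gamma''-\Gamma$ forces $\Gamma=\Gamma''$. Since every prime divisor $E$ over $X'$ may be realised on a suitably high choice of $W$ (to which the same argument applies), and since the coefficient of $E$ in $\Gamma$ is at most $1$, we obtain $a(E,X',\varphi_*\Delta)\ge -1$, completing the proof. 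The only delicate step is the identification of the exceptional loci of $p$ and $q$ using the birational contraction hypothesis; everything else is standard pushforward calculus together with the negativity lemma.
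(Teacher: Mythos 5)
Your proof is correct and follows essentially the same route as the paper's: pull back crepantly to a common resolution, observe that the pushforward to $X'$ of the crepant boundary is $\varphi_*\Delta$ (which gives $\mathbb{K}$-Cartierness and triviality), and then apply the negativity lemma to the difference of the two crepant boundaries to transfer log canonicity. The only difference is that you spell out the verification that $p$-exceptional divisors are $q$-exceptional and that $q_*\Gamma=\varphi_*\Delta$, which the paper asserts without proof.
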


\begin{proof}Take a log resolution $f:W \to X$ of $(X,\Delta)$ which resolves the singularities of $\varphi$ and let $g\colon W \to X'$ be the induced morphism.  
Let $\Gamma$ be a $\mathbb K$-divisor on $W$ such that  $K_W+\Gamma=f^*(K_X+\Delta) \sim_{\mathbb{K}} 0$.  Note that $g_*\Gamma =\varphi_*\Delta$. Thus  $K_{X'}+\varphi_* \Delta$ is $\mathbb{K}$-Cartier and $\mathbb{K}$-linearly trivial. Let $\Gamma'$ be the $\mathbb K$-divisor on $W$ such that  $K_W+\Gamma'=f^*(K_{X'}+\varphi_*\Delta) \sim_{\mathbb{K}} 0$. Thus, by the negativity lemma it follows that $\Gamma=\Gamma'$ and in particular $(X',\varphi_* \Delta)$ is log canonical.
\end{proof}

\begin{proof}[Proof of Theorem \ref{thm2}]The "only if" part is an immediate consequence of the above lemma. Thus, we show the "if" part. 
As in the proof of Theorem \ref{thm1}, we have that $-(K_Y+\Gamma)$ is ample and 
$$-(K_X+\Delta) = -\varphi^*(K_{Y}+\Gamma) +E,$$
where $E\ge 0$ is a $\varphi$-exceptional $\mathbb Q$-divisor.  Note that  the discrepancies of  $(Y,\Gamma)$ with respect to any $\varphi$-exceptional prime divisor is  at most $0$.
Let  $\alpha:W \to Y$ be a log resolution of $(Y,\Gamma)$ which resolves the indeterminacy of $\varphi$ and let $\beta \colon W \to X'$ be the induced morphism. 
By assumption $(Y,\Gamma)$ is log canonical. We write $\alpha^*(K_{Y}+\Gamma)=K_W+\Omega$, where the coefficient of $\Omega$ are less or equal to $1$.
Let $\mathcal{E}$ be the set consisting of all the $\alpha$-exceptional divisors on $W$  which are not $\beta$-exceptional  and all the components of $\Omega$ with coefficient equal to one. 
By  \cite[Theorem 4.1]{fujino-ss}    applied to $\mathcal E$, 
there exists a  birational morphism $f:Y' \to Y$ satisfying:

\begin{itemize}
\item[(1)] $Y'$ is a $\mathbb{Q}$-factorial normal variety,
\item[(2)] $Y' \dashrightarrow X$ is a birational contraction, and  
\item[(3)] there exists a $\mathbb Q$-divisor $\Gamma'$ such that $g^*(K_Y+\Gamma)=(K_{Y'}+\Gamma')$ and $(Y',\Gamma')$ is log canonical and $-(K_{Y'}+\Gamma')$ is semi-ample.
\end{itemize}
In particular $Y'$ is of Calabi--Yau type. Thus, Lemma \ref{lem-crep} implies that there exists  $\Delta'\ge 0$ such that $(X,\Delta+\Delta')$ is  a log Calabi--Yau pair.
\end{proof}

Similarly to Corollary \ref{char by lt-ness}, we immediately obtain:
\begin{cor}
Let $X$ be a $\mathbb{Q}$-Gorenstein normal variety such that $-K_X$ is big and $R(-K_X)$ is finitely generated. Then $X$ is of Calabi--Yau type if and only if  $\mathrm{Proj}\, R(-K_X)$ has log canonical singularities.
\end{cor}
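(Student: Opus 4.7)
The plan is to deduce this corollary as the direct analogue of how Corollary \ref{char by lt-ness} follows from Theorem \ref{thm1}: we simply apply Theorem \ref{thm2} with $\Delta = 0$ and $\Gamma = 0$. Since $X$ is $\mathbb{Q}$-Gorenstein, $K_X$ is $\mathbb{Q}$-Cartier, so $(X,0)$ is a valid log pair in the sense of the theorem, and the hypothesis that $R(-K_X)$ is finitely generated as a $k$-algebra gives the finite generation of $R(X,-m(K_X+\Delta)) = R(X,-mK_X)$ for a suitable $m \in \mathbb{N}$.

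Next I need to verify that the rational map
\[
\varphi\colon X \dashrightarrow Y := \mathrm{Proj}\, R(-K_X)
\]
is a birational contraction. Birationality is immediate from the assumption that $-K_X$ is big, since the image of the corresponding rational map then has dimension equal to $\dim X$. The fact that $\varphi$ is moreover a birational contraction (i.e.\ $\varphi^{-1}$ extracts no divisor) follows from \cite[Lemma 1.6]{hk}: the ample model of a big $\mathbb{Q}$-Cartier divisor never requires extracting divisors, and the exceptional locus of $\varphi$ is contained in the stable base locus of $-K_X$.

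With the hypotheses of Theorem \ref{thm2} verified, the theorem yields the equivalence: there exists $\Delta' \ge 0$ such that $(X,\Delta')$ is a log Calabi--Yau pair if and only if $(Y,\varphi_* 0) = (Y,0)$ has only log canonical singularities, which by Definition \ref{sing of pairs} means exactly that $Y = \mathrm{Proj}\, R(-K_X)$ has log canonical singularities. By Definition \ref{Fano pair}(ii), the existence of such a $\Delta'$ is precisely the definition of $X$ being of Calabi--Yau type, and this yields the corollary.

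Since the entire content of the corollary has already been packaged into Theorem \ref{thm2}, there is no substantive obstacle; the only small point to be checked carefully is the birational-contraction property of $\varphi$, which is a standard consequence of the bigness of $-K_X$ together with the finite generation of the anti-canonical ring via \cite[Lemma 1.6]{hk}.
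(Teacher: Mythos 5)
Your proposal is correct and matches the paper's approach: the paper derives this corollary immediately from Theorem \ref{thm2} by taking $\Delta=0$, exactly as you do, with the bigness of $-K_X$ and finite generation of $R(-K_X)$ supplying the hypotheses (in particular that $\varphi$ is a birational contraction, via \cite[Lemma 1.6]{hk}). Your additional verification of the birational-contraction property is a reasonable filling-in of the details the paper leaves implicit.
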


\medskip

In view of  Corollary \ref{char by lt-ness}, it is tempting to ask if for any variety $X$ of Calabi--Yau type and such that $-K_X$ is big, we have that $R(X,-K_X)$ is finitely generated. However, the example below shows that this is not true in general:

\begin{eg}\label{eg1}Let $S$ be the blow up of $\mathbb P^2$ at $9$ very general points and let $E=-K_S$.
Let $L_1$ be and  ample divisor  and let  $L_2=E-L_1$. Define  $X=\mathbb{P}(\mathcal{O}_S(L_1)\oplus \mathcal{O}_ S(L_2))$. Then Proposition\,\ref{split-cy} implies that $X$ is of  Calabi--Yau type. 
On the other hand, we have that $\mathcal O_X(-K_X)=\mathcal O_X(2)$ is not semi-ample since $E$ is not semi-ample. 
Thus, since $-K_X$ is big, it follows that  $R(X,-K_X)=R(X,\mathcal{O}_X(2))$   is not finitely generated.
\end{eg}

\section{On log canonical Fano varieties}\label{lc}

Considering Example \ref{eg1}, it seems worth studying under which condition a projective variety $X$ is a Mori dream space. 
 By \cite[Corollary 1.3.2]{bchm}, it follows that any variety of Fano type is a Mori dream space. The aim of this section is to study possible generalisation of this result. 

We begin with the following conjecture:

\begin{conj}\label{prob19}Let $(X,\Delta)$ be a $\mathbb{Q}$-factorial log canonical pair  such that $-(K_X+\Delta)$ is ample. Then $X$ is a Mori dream space.
\end{conj}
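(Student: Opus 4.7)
The plan is to reduce Conjecture \ref{prob19} to the klt log Fano case, where the conclusion follows from \cite[Corollary 1.3.2]{bchm}. The naive perturbation $\Delta' = (1-\varepsilon)\lfloor\Delta\rfloor + \{\Delta\}$ fails in general because $X$ itself may carry log canonical but non-klt singularities (for example, cone singularities over a log Calabi--Yau) that cannot be resolved by adjusting the boundary.

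My approach would be to first take a $\mathbb{Q}$-factorial dlt modification $\pi\colon Y\to X$ of $(X,\Delta)$, yielding $(Y,\Gamma)$ dlt with $K_Y+\Gamma=\pi^*(K_X+\Delta)$ and $-(K_Y+\Gamma)$ nef and big. Since every non-klt centre of a dlt pair is a component (or intersection of components) of the integral part of the boundary, the perturbed pair $(Y,\Gamma_\varepsilon)$ with $\Gamma_\varepsilon := \Gamma - \varepsilon\lfloor\Gamma\rfloor$ is klt for $0<\varepsilon\ll 1$, and $-(K_Y+\Gamma_\varepsilon) = -(K_Y+\Gamma)+\varepsilon\lfloor\Gamma\rfloor$ remains big. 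If $-(K_Y+\Gamma_\varepsilon)$ were also nef, Remark \ref{rm-weakfano} together with \cite[Corollary 1.3.2]{bchm} would imply that $Y$ is a Mori dream space; one could then try to descend this property to $X$ using the fact that $\pi$ contracts exactly an extremal face spanned by components of $\lfloor\Gamma\rfloor$ and that the Picard groups of $X$ and $Y$ differ only by these exceptional components, so that finite generation of the Cox ring of $Y$ transfers to finite generation of the Cox ring of $X$.

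The main obstacle is the failure of nefness of $-(K_Y+\Gamma_\varepsilon)$ in general: adding the effective divisor $\varepsilon\lfloor\Gamma\rfloor$ can destroy nefness along curves contained in $\lfloor\Gamma\rfloor$. To circumvent this, one would want to run a $(K_Y+\Gamma_\varepsilon)$-MMP. Since $K_Y+\Gamma_\varepsilon$ is not pseudo-effective, the MMP terminates with a Mori fibre space $Y'\to Z$ by \cite{bchm}. When $Z$ is a point, $Y'$ is a klt log Fano and the argument concludes by transferring the MDS property back through flips and divisorial contractions; however, when $\dim Z>0$ the only remaining structure is a relative Fano one, and one must argue inductively on dimension by combining the MDS property of the base $Z$ with a fibration-style result analogous to Proposition \ref{split-mds}. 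Controlling this inductive step, together with the descent from $Y$ (or the MMP output) back to $X$ through the various birational modifications, is the heart of the difficulty, and is why Conjecture \ref{prob19} appears to lie beyond current log canonical MMP techniques in full generality.
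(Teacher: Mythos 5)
You have correctly recognised that the statement is a \emph{conjecture}: the paper does not prove it unconditionally, and neither do you, so your proposal should be judged as a strategy sketch rather than a proof. Your diagnosis of the difficulty is accurate and consistent with the paper's own framing. The remark following Conjecture \ref{prob19} observes that the dlt case already follows from \cite[Corollary 1.3.2]{bchm} (there one may shrink the boundary, since ampleness of $-(K_X+\Delta)$ is an open condition in $N^1(X)$), and the genuine obstruction is exactly the one you isolate: after passing to a dlt modification $\pi\colon Y\to X$, the divisor $-(K_Y+\Gamma)=-\pi^*(K_X+\Delta)$ is only nef and big, so the perturbed pair $(Y,\Gamma-\varepsilon\lfloor\Gamma\rfloor)$ need not be a klt weak log Fano and Remark \ref{rm-weakfano} does not apply.

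Where your route diverges from the paper is in the conditional result, Theorem \ref{thm3}, which does not attempt any reduction to the klt log Fano case. Instead it verifies Hu--Keel's finite generation criterion \cite[Theorem 2.13]{hk} directly: Kodaira vanishing for log canonical pairs gives $H^1(X,\mathcal{O}_X)=0$, hence $\mathrm{Pic}_{\mathbb{Q}}(X)\simeq N^1_{\mathbb{Q}}(X)$; ampleness of $-(K_X+\Delta)$ allows one to write each element $L_i$ of a basis of $\mathrm{Pic}_{\mathbb{Q}}(X)$, after rescaling, as $K_X+\Delta_i$ with $(X,\Delta_i)$ log canonical; and finiteness of log models in the sense of \cite{SC} and \cite{ka}, combined with the \emph{assumed} existence of lc minimal models and abundance, gives finite generation of the resulting multigraded adjoint ring. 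This sidesteps both of the obstacles your approach runs into (loss of nefness under perturbation, and the bookkeeping through the MMP and the terminal Mori fibre space), at the price of assuming the lc MMP conjectures. One concrete improvement to your sketch: the descent from $Y$ back to $X$ is not actually the hard part, since by \cite[Theorem 1.1]{okawa} the image of a Mori dream space under a surjective morphism of $\mathbb{Q}$-factorial projective varieties is again a Mori dream space; so it would suffice to show the dlt modification $Y$ is an MDS. The irreducible difficulty is precisely that $Y$ is not known to be of Fano type, and that is why the statement remains a conjecture.
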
 
Note that  by \cite[Corollary 1.3.2]{bchm},
 the conjecture holds to be true under the assumption that $(X,\Delta)$ is dlt. 
We now show that the conjecture follows from the standard conjectures of the minimal model program:

\begin{thm}\label{thm3}Assme  the existence of minimal models and the abundance conjecture for log canonical pairs  in dimension $n$. 

Then Conjecture \ref{prob19} is true in dimension $n$.
\end{thm}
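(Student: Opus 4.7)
The plan is to adapt the argument of \cite[Corollary 1.3.2]{bchm}, which establishes the Mori dream space property for varieties of Fano type in the klt case, to the log canonical setting, replacing each use of \cite{bchm} by the corresponding statement from the assumed log canonical MMP and abundance in dimension $n$. I will verify the three defining properties of Definition~\ref{Mori dream space} in turn.

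For condition~(i), since $(X,\Delta)$ is log canonical with $-(K_X+\Delta)$ ample, the Ambro--Fujino vanishing theorem gives $H^i(X,\mathcal{O}_X)=0$ for $i>0$. Hence $\mathrm{Pic}^0(X)=0$, so $\mathrm{Pic}(X)$ is finitely generated and $\mathrm{Pic}(X)_{\mathbb Q}\simeq\mathrm{N}^1(X)_{\mathbb Q}$. For condition~(ii), the cone theorem for log canonical pairs (a consequence of the assumed MMP) combined with the ampleness of $-(K_X+\Delta)$ forces the whole of $\overline{\mathrm{NE}}(X)$ to be rational polyhedral; dually, so is $\mathrm{Nef}(X)$, and the assumed abundance conjecture shows that its extremal generators are semi-ample.

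The core of the argument is condition~(iii). Given a $\mathbb Q$-divisor $D$ in the interior of $\mathrm{Mov}(X)$, I will produce a small birational modification $\pi_D\colon X\dashrightarrow X_D$ on which the strict transform of $D$ is semi-ample. Setting $A:=-(K_X+\Delta)$, I choose a general effective $\mathbb Q$-divisor $B\sim_{\mathbb Q}D+A$ (possible since $D+A$ is big for any pseudo-effective $D$) and, writing $A''=\tfrac1n H$ for $H\in|nA|$ general and $n\gg0$, I set for $0<\epsilon\ll1$
\[
\Delta_D\;:=\;\Delta+\epsilon B+(1-\epsilon)A''.
\]
A Bertini argument together with coefficient bookkeeping shows $(X,\Delta_D)$ is log canonical, and directly
\[
K_X+\Delta_D\;\sim_{\mathbb Q}\;(K_X+\Delta)+\epsilon(D+A)+(1-\epsilon)A\;\sim_{\mathbb Q}\;\epsilon D.
\]
By the assumed existence of log minimal models, a $(K_X+\Delta_D)$-MMP terminates at $\pi_D\colon X\dashrightarrow X_D$; because $D$ lies in the interior of $\mathrm{Mov}(X)$, every step of this MMP is small. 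The assumed abundance then yields semi-ampleness of $(\pi_D)_*(K_X+\Delta_D)\sim_{\mathbb Q}\epsilon(\pi_D)_*D$ on $X_D$, so $D\in\pi_D^*\mathrm{Nef}(X_D)$.

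The main technical obstacle is the last step: to conclude that only finitely many modifications $X_D$ arise as $D$ varies over $\mathrm{Mov}(X)$, so that the decomposition $\mathrm{Mov}(X)=\bigcup_i\pi_i^*\mathrm{Nef}(X_i)$ is finite. This is the log canonical analogue of the finiteness of models \cite[Corollary 1.1.5]{bchm}: given a finite-dimensional polytope of log canonical boundaries, only finitely many outputs of MMP arise. It should follow from the assumed LC MMP together with LC abundance, applied in families by running MMP with scaling on the compact polytope $\{\Delta_D\}$ cut out from a compact slice of $\mathrm{Mov}(X)$; carrying out this finiteness argument in the LC setting, where the klt tools of \cite{bchm} are unavailable, is where the full strength of the MMP--abundance package is essential.
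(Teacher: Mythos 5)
Your route differs from the paper's --- you verify the three axioms of Definition \ref{Mori dream space} directly, whereas the paper proves finite generation of the multi-adjoint ring $\bigoplus H^{0}(X,\sum_i m_i m(K_X+\Delta_i))$ for boundaries $\Delta_i$ realising a basis of $\mathrm{Pic}_{\mathbb{Q}}(X)$ and then invokes \cite[Theorem 2.13]{hk} --- and there is a genuine gap in your construction for condition (iii). For an arbitrary $D\in\mathrm{Mov}(X)$ the class $D+A$ is big but in general not ample, so every member $B$ of its $\mathbb{Q}$-linear system contains the stable base locus $\mathbf{B}(D+A)$, which may contain a non-klt centre of $(X,\Delta)$. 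Along such a centre the discrepancy of $(X,\Delta)$ is already $-1$, and adding $\epsilon B$ for any $\epsilon>0$ drops it strictly below $-1$; no ``coefficient bookkeeping'' can repair this, because, unlike in the klt case, there is no room to absorb a perturbation supported on the non-klt locus. This is exactly why the paper does not attempt to realise every movable class as an lc adjoint divisor: it only writes a basis $L_1,\dots,L_\ell$ of $\mathrm{Pic}_{\mathbb{Q}}(X)$, rescaled so that $L_i+A$ is \emph{ample}, in the form $K_X+\Delta_i$ with $(X,\Delta_i)$ log canonical (general members of ample systems avoid the non-klt centres), and lets Hu--Keel's criterion produce the decomposition of $\mathrm{Mov}(X)$ for you.

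The second problem is that the step you yourself flag as the main obstacle --- that only finitely many models $X_D$ occur --- is left at the level of ``it should follow''. This finiteness is precisely the geography of log models for lc pairs over the polytope spanned by the relevant boundaries, and it is a known consequence of the assumed existence of minimal models and abundance in dimension $n$: see \cite[Theorem 3.4]{SC} or \cite[Theorem 3]{ka}, which the paper quotes (after passing to a log resolution so that the boundaries are log smooth) before concluding finite generation as in \cite[Corollary 1.1.9]{bchm}. Since this is the heart of the theorem, the proof is incomplete until you either cite one of these results or carry out the argument. Your treatment of condition (i) via Ambro--Fujino vanishing does agree with the paper's first step, and the cone-theorem argument for condition (ii) is fine, but the two gaps above occur exactly where the lc setting diverges from the klt one.
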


\begin{proof}
First, note that  Kodaira vanishing  for log canonical pairs (e.g. see \cite{ambro-quasi-log}   and \cite[Theorem 2.42]{fujino-book}) implies that $H^1(X, \mathcal{O}_X)=0$. 
Thus $\mathrm{Pic}_{\mathbb{Q}}(X)\simeq N^1_{\mathbb{Q}}(X)$ (eg. see \cite[Corollary 9.5.13]{fga-ex}).  
Let $L_1,\dots,L_\ell$ be a basis of $\mathrm{Pic}_{\mathbb{Q}}(X)$. Then,  for any $i=1,\dots,\ell$, after possibly rescaling $L_i$, there exist effective $\mathbb{Q}$-divisors $\Delta_i$ such that $(X,\Delta_i)$ is log canonical and $L_i \sim_{\mathbb{Q}} K_X+\Delta_i$.

We claim that the ring
$$\bigoplus_{(m_1,\dots, m_l) \in \mathbb{N}}H^{0}(X, \sum_{i=1}^l m_i m(K_X+\Delta_i))$$
s  finitely generated for any 
positive integer $m$ such that $m(K_X+\Delta_i)$ is Cartier for any $i=1,\dots,\ell$. 
Note that the claim implies that $X$ is a Mori dream space by \cite[Theorem 2.13]{hk}.

We now proceed with the proof of the claim. After taking a log resolution, we may assume that $(X,\Delta_i)$ is log smooth for any $i=1,\dots,\ell$. 
Let $V\subseteq \mathrm{Div}_{\mathbb R}(X)$ be the subspace spanned by the components of $\Delta_1,\dots,\Delta_\ell$. By \cite[Theorem 3.4]{SC} or \cite[Theorem 3]{ka}, there exist $\varphi_j\colon X\dashrightarrow Y_i$, with $j=1,\dots,p$ such that if $\Delta\in V$ is such that $(X,\Delta)$ is log canonical and $K_X+\Delta$ is pseudo-effective, then $\varphi_j$ is the log terminal model of $(X,\Delta)$ for some $j=1,\dots,p$. Thus, we obtain the claim, by using the same methods as in the proof of 
\cite[Corollary 1.1.9]{bchm}. 
\end{proof}

In particular, we obtain:

\begin{cor}Let $(X,\Delta)$ a $3$-dimensional  log canonical pair  such that $-(K_X+\Delta)$ is ample. Then $X$ is a Mori dream space.

\end{cor}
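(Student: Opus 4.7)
The plan is to reduce the corollary to Theorem~\ref{thm3} with $n=3$: one only needs to verify that the two standing hypotheses of that theorem, namely the existence of minimal models and the abundance conjecture for log canonical pairs, both hold in dimension three. Once this is done, since $(X,\Delta)$ is a $\mathbb{Q}$-factorial log canonical three-dimensional pair with $-(K_X+\Delta)$ ample, it lies exactly in the scope of Conjecture~\ref{prob19} and the result is immediate.

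For the first input, I would appeal to Birkar's work on the LMMP for log canonical pairs, which yields the existence of minimal models for $\mathbb{Q}$-factorial log canonical three-folds (building on Shokurov's three-dimensional flips and on the results of \cite{bchm}). For the second input, I would cite the abundance theorem for log canonical three-folds: the klt case is due to Keel--Matsuki--McKernan, and the extension to the full log canonical case is due to Fujino. Both statements are formulated in the generality of an effective $\mathbb{Q}$-divisor boundary $\Delta$ on a $\mathbb{Q}$-factorial variety, which is precisely what the proof of Theorem~\ref{thm3} requires (there the boundaries $\Delta_i$ corresponding to a basis of $\mathrm{Pic}_{\mathbb{Q}}(X)$ are log canonical but not necessarily klt, so one genuinely needs the log canonical version).

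With these two ingredients assembled, the remainder is purely formal: invoke Theorem~\ref{thm3} with $n=3$ to conclude that $X$ is a Mori dream space. There is no substantial mathematical obstacle beyond quoting the correct theorems; the main task is simply to identify the combination of references that together deliver minimal models and abundance for $\mathbb{Q}$-factorial log canonical three-folds with an arbitrary $\mathbb{Q}$-divisor boundary, and then to observe that, once this is granted, the proof of Theorem~\ref{thm3} goes through without any further dimension-specific modification.
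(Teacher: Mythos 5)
Your proposal matches the paper's (implicit) argument exactly: the corollary is stated there as an immediate consequence of Theorem~\ref{thm3}, with the existence of minimal models and abundance for log canonical threefolds taken as known, which is precisely the reduction you carry out. The only point worth noting is that, as you implicitly observe, one needs the $\mathbb{Q}$-factoriality hypothesis (present in Conjecture~\ref{prob19} and in the definition of Mori dream space) which the corollary's statement omits; your reading supplies it correctly.
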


Note that, in general,  a log canonical weak Fano pair is not necessarily a  Mori dream space (see \cite[Section 5]{g1}).  The following example shows that even assuming that $(X,\Delta)$ is a log canonical pair such that  $-(K_X+\Delta)$ is semi-ample and big,  then $X$ is not necessarily a Mori dream space, even assuming $H^1(X, \mathcal{O}_X)=0$.

\begin{eg}\label{rem5} Let $S$ be a K3 surface which is not a Mori dream space. Let $A$ be a very ample divisor on $S$  and let $X=\mathbb{P}(\mathcal{O}_S \oplus \mathcal{O}_S(A))$.  Thus, \cite[Theorem 1.1]{okawa} implies that $X$ is not a Mori dream space. On the other hand, it is easy to check that if $E$ is the section corresponding to the summand $\mathcal O_S$ then $(X,E)$ is log canonical and  $-(K_X+E)$ is semi-ample and big. Furthermore, we have $H^1(X,\mathcal O_X)=0$.
\end{eg}

\section{On varieties of Fano type}\label{questions}

The purpose to this section is to explore possible characterizations of varieties of Fano type. 
We show two examples which show that some of the questions raised before about this problem have a negative answer.

\begin{eg}\label{eg-prob2} Assume that $(X,\Delta)$ is a kawamata log terminal pair such that $-(K_X+\Delta)$ is movable and big. Then $X$ is not necessarily of Fano type.

Indeed, let $S$ be a  smooth projective variety of general type such that $H^1(S,\mathcal{O}_X)=0$ and such that the Picard number of $S$ is one. Let $L_1$ and $L_2$ be ample divisors on $S$, and let 
$$\mathcal E=\mathcal O_S(L_1)\oplus \mathcal O_S(L_2) \oplus \mathcal O_S(-K_S-L_1-L_2).$$
Then the anti-canonical divisor of $X:=\mathbb{P}_S(E)$ 
is movable and big but since $X$ is not rationally connected, it follows that $X$ is not of Fano type.
Note that Proposition \ref{split-mds} implies that $X$ is a Mori dream space.
 \end{eg}

The following question is motivated by the birational Borisov-Alexeev-Borison conjecture. In \cite{okada}, Okada shows that the class of rationally connected varieties and of Fano type are birationally unbounded. Thus, it is natural to ask if these classes are the same up to birational equivalence.  Recall that any variety of Fano type is rationally connected (see \cite{zhang} and \cite{hacon-mckernan-shok-rat}) and  all smooth rationally connected surface are rational.

\begin{ques}\label{prob3}
Let $X$ be a rationally connected variety. Then is $X$ birationally equivalent to a variety of Fano type?
 \end{ques}

\providecommand{\bysame}{\leavevmode \hbox \o3em
{\hrulefill}\thinspace}

\end{document}